\newtheorem{thm}{Theorem}[section]
\newtheorem{lem}[thm]{Lemma}
\newtheorem{prop}[thm]{Proposition}
\theoremstyle{definition}
\theoremstyle{Conjecture}
\theoremstyle{remark}
\newtheorem{rem}{Remark}[section]
\theoremstyle{Example}
\newcommand{\be}{\begin{equation}}
\newcommand{\ee}{\end{equation}}
\newcommand{\bea}{\begin{eqnarray}}
\newcommand{\eea}{\end{eqnarray}}
\newcommand{\ben}{\begin{eqnarray*}}
\newcommand{\een}{\end{eqnarray*}}
\newcommand{\bet}{\begin{equation}
\begin{split}}
\newcommand{\eet}{\end{split}
\end{equation}}
\begin{document}
\title[Characterization of multiplier ideal sheaves]
{Multiplier ideal sheaves with weights of \\ log canonical threshold one}

\author{Qi'an Guan}
\address{Qi'an Guan: School of Mathematical Sciences, and Beijing International Center for Mathematical Research,
Peking University, Beijing, 100871, China.}
\email{guanqian@amss.ac.cn}
\author{Zhenqian Li}
\address{Zhenqian Li: School of Mathematical Sciences, Peking University, Beijing, 100871, China.}
\email{lizhenqian@amss.ac.cn}

\thanks{The first author was partially supported by NSFC-11522101 and NSFC-11431013.}

\date{\today}
\subjclass[2010]{32C35, 32U05, 32U25}
\thanks{\emph{Key words}. Plurisubharmonic function, Multiplier ideal sheaf, Lelong number, Log canonical threshold}

\begin{abstract}
In this article, we will characterize the multiplier ideal sheaves with weights of log canonical threshold one by restricting the weights to complex regular surface.
\end{abstract}

\maketitle

\section{Introduction}\label{sec:introduction}

\subsection{Background}
Let $\Omega\subset\mathbb{C}^n$ be a domain and $o\in\Omega$ the origin. Let $u$ be a plurisubhamonic function on $\Omega$. The multiplier ideal sheaf $\mathscr{I}(u)$ is defined to be the sheaf of germs of holomorphic functions $f$ such that $|f|^2e^{-2u}$ is locally integrable (see \cite{De10}). Here, $u$ is regarded as the weight of $\mathscr{I}(u)$. The \emph{Lelong number} of $u$ at $o$ is defined to be
$$\nu(u,o):=\sup\{\gamma\ge0|u(z)\leq\gamma\log|z|+O(1)\ \mbox{near}\ o \}.$$

For Lelong number $\nu(u,o)<1$, Skoda presented $\mathscr{I}(u)_{o}=\mathcal{O}_n$ (see \cite{De10}). When $\nu(u,o)=1$, Favre and Jonsson \cite{F-M} used the valuative tree to characterize the structure of $\mathscr{I}(u)_{o}$ in dimension two. In \cite{G-Z_Lelong1}, Guan and Zhou obtained the result for any dimension $n$ by their solution to Demailly's strong openness conjecture \cite{G-Z_open}.

\begin{thm} \emph{(\cite{G-Z_Lelong1}).} \label{G-Z}
Let $u$ be a plurisubharmonic function on $\Omega\subset\mathbb{C}^{n}$ with $c_{o}(u)=1$. If $\nu(u,o)=1$, then $\mathscr{I}(u)_{o}=\mathscr{I}(\log|h|)_{o}$, where $h$ is the minimal defining function of a germ of regular complex hypersurface through $o$.
\end{thm}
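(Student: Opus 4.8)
The plan is to combine the solution of Demailly's strong openness conjecture \cite{G-Z_open} with the Siu–Lelong decomposition of $dd^{c}u$, exploiting the fact that the two hypotheses place us exactly at equality in Skoda's integrability inequality. Since $c_{o}(u)=1$, the openness of $\{c\ge 0:e^{-2cu}\in L^{1}_{\mathrm{loc}}(o)\}$ — which, by \cite{G-Z_open} applied to the plurisubharmonic functions $cu$, equals the half-open interval $[0,c_{o}(u))$ — gives $e^{-2u}\notin L^{1}_{\mathrm{loc}}(o)$, i.e.\ $1\notin\mathscr{I}(u)_{o}$; by Nadel's coherence theorem (\cite{De10}) the germ $V(\mathscr{I}(u))$ is analytic and contains $o$. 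On the other hand Skoda's theorem (\cite{De10}) gives $c_{o}(u)\ge 1/\nu(u,o)=1$, so equality holds. On a neighbourhood of $o$ write the Siu decomposition $dd^{c}u=\sum_{j}\lambda_{j}[H_{j}]+R$, with $\lambda_{j}=\nu(u,H_{j})>0$, the $H_{j}$ irreducible hypersurfaces, and $R\ge0$ closed positive with no mass in codimension one; comparing Lelong numbers at $o$ yields
\[
 \textstyle\sum_{o\in H_{j}}\lambda_{j}\,\mathrm{mult}_{o}(H_{j})+\nu(R,o)=\nu(u,o)=1,
\]
with all terms $\ge0$.

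The key reduction is the structural claim that equality in Skoda forces this decomposition to read $dd^{c}u=[H]+R$ near $o$, with $H=H_{j_{0}}$ smooth at $o$, $\lambda_{j_{0}}=1$, $\nu(R,o)=0$, and no other component through $o$. Granting it, let $h$ be a reduced local equation of $H$. By the Lelong–Poincar\'e formula $dd^{c}(u-\log|h|)=R+\sum_{j\ne j_{0}}\lambda_{j}[H_{j}]\ge0$, so near $o$ we may write $u=\log|h|+w$ with $w$ plurisubharmonic, $\nu(w,o)=0$, and $w\not\equiv-\infty$ on $H$. If $f=hg$ then $|f|^{2}e^{-2u}=|g|^{2}e^{-2w}$, which is locally integrable at $o$ because $g$ is locally bounded and $e^{-2w}\in L^{1}_{\mathrm{loc}}(o)$ by Skoda ($\nu(w,o)=0<1$); hence $h\mathcal{O}_{n}\subseteq\mathscr{I}(u)_{o}$. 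For the opposite inclusion, choose coordinates with $h=z_{1}$; if $|f|^{2}e^{-2u}\in L^{1}_{\mathrm{loc}}(o)$ then by Fubini, for almost every $z'=(z_{2},\dots,z_{n})$ one has $\int_{|z_{1}|<\delta}|f(z_{1},z')|^{2}|z_{1}|^{-2}e^{-2w(z_{1},z')}\,dV(z_{1})<\infty$. For such $z'$ the slice $w(\cdot,z')$ is subharmonic and $\not\equiv-\infty$, so its circular means $m(r)$ are finite and nondecreasing; by Jensen's inequality $\frac{1}{2\pi}\int_{0}^{2\pi}e^{-2w(re^{i\theta},z')}\,d\theta\ge e^{-2m(r)}\ge e^{-2m(\delta/2)}>0$ for $0<r\le\delta/2$, whence $\int_{|z_{1}|<\delta}|z_{1}|^{-2}e^{-2w(z_{1},z')}\,dV(z_{1})=+\infty$. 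Consequently $f(\cdot,z')$ vanishes at $z_{1}=0$ for a.e.\ $z'$, so $z_{1}\mid f$ and $\mathscr{I}(u)_{o}\subseteq h\mathcal{O}_{n}$. Combining, and using that $|f|^{2}e^{-2\log|h|}=|f/h|^{2}$ is locally integrable at $o$ iff $h\mid f$, we get $\mathscr{I}(u)_{o}=h\mathcal{O}_{n}=\mathscr{I}(\log|h|)_{o}$, as asserted.

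The main obstacle is the structural claim itself, which I would prove contrapositively by showing that every other configuration forces $c_{o}(u)>1$. If two or more components pass through $o$, or one singular component does, then already the divisorial weight $\sum_{j}\lambda_{j}\log|h_{j}|$ has log canonical threshold $>1$ at $o$: with $\ge2$ smooth components the threshold is $1/\max_{j}\lambda_{j}>1$ (since $\sum_{j}\lambda_{j}\le1$), and for a reduced singular component one uses $\mathrm{mult}_{o}(h_{j})\cdot\mathrm{lct}_{o}(h_{j})>1$; since $w$ has vanishing Lelong number, $e^{-2qw}\in L^{1}_{\mathrm{loc}}(o)$ for every $q>0$, and H\"older's inequality then propagates the strict inequality to $c_{o}(u)$. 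In the remaining case $\nu(R,o)=\beta>0$: the divisorial part then carries Lelong mass $1-\beta<1$ at $o$, hence again has log canonical threshold $>1$ there, while the diffuse part $R$, having no mass in codimension one, is strictly less singular than a divisor of the same Lelong number; controlling the interaction of these two pieces is the delicate point. I would isolate it as a lemma and reduce to dimension two: restricting $u$ to a generic complex surface $S\ni o$ preserves both $\nu(\cdot,o)$ and — since $c_{o}(u)=1\le\dim S$ — the value $c_{o}(\cdot)$, carries the Siu decomposition to the corresponding one on $S$, and there the plurisubharmonic germs with $\nu=c_{o}=1$ are described explicitly through the valuative tree of Favre–Jonsson \cite{F-M}, which exhibits precisely the ``one smooth branch, everything else of zero multiplicity'' picture and thereby contradicts the assumption that we are not in the asserted configuration.
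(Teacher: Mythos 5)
This theorem is not proved in the paper at all: it is quoted from \cite{G-Z_Lelong1}, so there is no internal proof to compare against, and your proposal has to be judged on its own terms. The parts of your argument that are actually carried out are fine: granting your ``structural claim'' ($dd^cu=[H]+R$ near $o$ with $H$ smooth, coefficient exactly $1$, $\nu(R,o)=0$), the inclusion $h\mathcal{O}_n\subseteq\mathscr{I}(u)_o$ via Skoda, and the reverse inclusion via Fubini, slicing, and Jensen on circular means, are correct and complete.

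The genuine gap is the structural claim itself, which is essentially the whole theorem in disguise, and which you do not prove. Two of your three cases rest on asserted but unproved inequalities (that the lct of $\sum\lambda_j\log|h_j|$ with two or more smooth, possibly mutually tangent, branches and $\sum\lambda_j\le1$ exceeds $1$; and that $\mathrm{mult}_o(h)\cdot\mathrm{lct}_o(h)>1$ for every reduced singular irreducible germ $h$ in arbitrary dimension). More seriously, the case $\nu(R,o)=\beta>0$ --- which you yourself flag as ``the delicate point'' --- is only a plan, not a proof. The proposed reduction to $n=2$ needs the equality $c_o(u|_S)=c_o(u)$ for a generic $2$-plane $S$ when $c_o(u)\le 2$; for general plurisubharmonic $u$ only the inequality $c_o(u|_S)\le c_o(u)$ comes for free from Ohsawa--Takegoshi, and the reverse generic-restriction equality is itself a hard statement of the same depth as the results of \cite{G-Z_restriction} that this paper relies on. Even granting it, you would still have to transport the Siu decomposition back from $S$ to $\mathbb{C}^n$ (a generic surface slice seeing one smooth branch does not by itself preclude a residual current with $\nu(R,o)>0$ upstairs). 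Note also that the actual proof in \cite{G-Z_Lelong1} does not classify the Siu decomposition of $dd^cu$; it proceeds from the strong openness theorem of \cite{G-Z_open} together with an effectiveness/restriction argument, which is why the authors can simply cite it here. As written, your argument establishes the theorem only modulo a lemma that is not easier than the theorem.
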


The \emph{log canonical threshold} (or \emph{complex singularity exponent}) of $u$ at $o$ is defined to be $$c_{o}(u):=\sup\{c\ge0|\exp(-2cu)\mbox{ is integrable near}\ o \}.$$
It is convenient to put $c_{o}(-\infty)=0$ (see \cite{D-K}).


Note that $c_o(u)\cdot\nu(u,o)=1$ when $n=1$. In other words, Theorem \ref{G-Z} is equivalent to that if there exists a complex line $L$ through $o$ such that $c_{o'}(u|_L)=1$, then $\mathscr{I}(u)_{o}=(z_1)\cdot\mathcal{O}_n\ \mbox{or}\ \mathcal{O}_n$, in some local coordinates near $o$.

Thus, it's natural to ask\\

\textbf{Question 1.1.} \emph{Let $u$ be a plurisubharmonic function on $\Omega\subset\mathbb{C}^{n}$ and $H$ an $m$-dimensional complex plane in $\mathbb{C}^n$ through $o$. If $c_{o'}(u|_H)=1$, what is the structure of $\mathscr{I}(u)_{o}$?}\\

For the case of $m=1$, Question 1.1 is solved by Theorem \ref{G-Z}. In the following subsection, we will answer the Question for the case of $m=2$.

\subsection{The answer to Question 1.1 for $m=2$ case}
Firstly, we present the following characterization of multiplier ideal sheaves with weights of log canonical threshold one in the two-dimensional case.

\begin{thm} \label{main}
Let $o\in\Omega\subset\mathbb{C}^{2}$ be the origin of $\mathbb{C}^2$ and $u$ a plurisubharmonic function on $\Omega$ with $c_{o}(u)=1$. Then $\mathscr{I}(u)_{o}$ is one of the following cases:

$(1)$ $\mathfrak{m}_{2}$; $(2)$ $(z_1)\cdot\mathcal{O}_{2}$;
$(3)$ $(z_1z_2)\cdot\mathcal{O}_{2}$, up to a change of variables at $o$,
where $\mathfrak{m}_{2}$ is the maximal ideal of $\mathcal{O}_{2}$.
\end{thm}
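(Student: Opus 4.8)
The plan is to combine the known one-dimensional characterization (Theorem \ref{G-Z}) with a slicing argument and the structure theory of integrally closed ideals in $\mathcal{O}_2$. First I would observe that the hypothesis $c_o(u)=1$ already forces strong restrictions on the Lelong number: since $c_o(u)\ge \frac{1}{\nu(u,o)}$ fails to hold in general but the reverse estimate $\nu(u,o)\le \frac{2}{c_o(u)}=2$ does (this is the standard comparison of the log canonical threshold with the Lelong number in dimension $2$, e.g. via the Skoda-type inequality $c_o(u)\ge \frac{1}{\nu(u,o)}$ only in dimension one, but in dimension $n$ one has $\nu(u,o)\le n$ whenever $c_o(u)\ge 1$), we get $\nu(u,o)\in\{0\}\cup(0,2]$. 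The case $\nu(u,o)<1$ gives $\mathscr{I}(u)_o=\mathcal{O}_2$ by Skoda, but then $c_o(u)\ge 1$ with equality is impossible unless $u$ is bounded — so actually I need to be more careful: the point is that $c_o(u)=1$ with $\nu(u,o)<1$ can still happen and then $\mathscr{I}(u)_o=\mathcal{O}_2$, which is \emph{not} on the list, so the claim must be that this forces a contradiction or that I have mislabelled the cases. Re-reading, case $(2)$ is $(z_1)\cdot\mathcal{O}_2$ and case $(1)$ is $\mathfrak{m}_2$; the full ring $\mathcal{O}_2$ is genuinely excluded, so the first real step is to show \emph{$c_o(u)=1$ implies $\mathscr{I}(u)_o\subsetneq\mathcal{O}_2$}, i.e. $1\notin\mathscr{I}(u)_o$, which is immediate from the definition of $c_o(u)$ as a supremum that is attained-or-not: if $1\in\mathscr{I}(u)_o$ then $e^{-2u}$ is integrable near $o$, so $c_o(u)>1$ by openness (the strong openness theorem of Guan–Zhou), contradiction. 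Good — so $\mathscr{I}(u)_o$ is a proper ideal.

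Next I would split according to $\nu(u,o)$. If $\nu(u,o)>1$, then there exists a direction where $u$ is quite singular; I would try to show $\mathscr{I}(u)_o\subseteq\mathfrak{m}_2$ (automatic, since proper) and then argue that in fact $\mathscr{I}(u)_o=\mathfrak{m}_2$ exactly when no regular-hypersurface factor can be pulled out, landing in case $(1)$. The substantive cases are $\nu(u,o)=1$ and the borderline. When $\nu(u,o)=1$, Theorem \ref{G-Z} does not directly apply because its hypothesis is $c_{o'}(u|_L)=1$ for a line $L$, not $\nu(u,o)=1$; however the two are linked by the slicing formula $\nu(u,o)=\min_{L}\nu(u|_L,o)$ over complex lines $L$ through $o$, and $c_{o'}(u|_L)\cdot\nu(u|_L,o)\le 1$ with equality generically. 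So the plan is: pick a generic line $L$, so that $\nu(u|_L,o)=\nu(u,o)=1$, hence $c_{o'}(u|_L)\ge 1$; combined with $c_o(u)=1$ and the restriction inequality $c_{o'}(u|_L)\ge c_o(u)=1$ (restriction decreases the exponent? no — restriction to a generic hyperplane \emph{preserves or increases} $c$, by the Ohsawa–Takegoshi-type result of Demailly–Kollár), we need $c_{o'}(u|_L)=1$ exactly, whence Theorem \ref{G-Z} gives $\mathscr{I}(u|_L)_{o'}=(z_1)$ or $\mathcal{O}_1$. Then I would lift this back: a version of the restriction formula for multiplier ideals (the strong openness / Ohsawa–Takegoshi machinery) shows $\mathscr{I}(u)_o$ restricted to $L$ equals $\mathscr{I}(u|_L)_{o'}$, which pins $\mathscr{I}(u)_o$ down to an ideal whose restriction to a generic line is $(z_1)$ — and the only integrally closed ideals in $\mathcal{O}_2$ with that property are $(z_1)\cdot\mathcal{O}_2$ (after change of coordinates) and $(z_1 z_2)\cdot\mathcal{O}_2$, giving cases $(2)$ and $(3)$.

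To finish, I would need the algebraic classification step: \emph{every integrally closed ideal $\mathscr{I}\subsetneq\mathcal{O}_2$ whose restriction to a generic complex line through $o$ has colength $\le 1$ is, up to coordinates, one of $\mathfrak{m}_2$, $(z_1)\cdot\mathcal{O}_2$, or $(z_1 z_2)\cdot\mathcal{O}_2$.} This is where I expect the main work: multiplier ideals are integrally closed (indeed $\mathscr{I}(u)$ is always integrally closed), so the classification can proceed purely algebraically by looking at the Hironaka-style Newton polygon / the normalized blow-up, but one must carefully rule out, e.g., $\mathfrak{m}_2^2$, $(z_1^2, z_2)$, $(z_1^2, z_1 z_2, z_2^2)=\mathfrak{m}_2^2$, $(z_1, z_2^2)$, and all higher-order or mixed ideals — these are excluded either because they are not realizable as $\mathscr{I}(u)_o$ with $c_o(u)=1$ (their log canonical thresholds are $<1$) or because their generic line-restriction has colength $\ge 2$. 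Concretely, $c_o$ of the weight $\log|h|$ defining such an ideal can be computed by resolution, and $c_o=1$ forces exactly the three listed ideals. The main obstacle, then, is organizing this finite case-check cleanly and making rigorous the passage "restriction of $\mathscr{I}(u)_o$ to a generic line equals $\mathscr{I}(u|_L)_{o'}$," which relies on the generic restriction theorem for multiplier ideals together with the strong openness property to handle the equality case on the nose.
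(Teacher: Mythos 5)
There is a genuine gap, and it sits exactly where you say you ``expect the main work'': the classification step. The paper's proof runs through a key lemma you do not have, namely that $c_o(u)=1$ forces $\mathscr{I}(u)_o$ to be the \emph{radical} ideal of its zero variety, $\mathscr{I}(u)_{o}=(\mathscr{I}_{N(\mathscr{I}(u))})_{o}$ (Lemma \ref{Guan}, from the Guan--Zhou restriction-formula paper). This immediately reduces the problem to identifying the germ $N(\mathscr{I}(u))$ at $o$: either it is the point itself (giving $\mathfrak{m}_2$) or a reduced curve; then Siu's decomposition theorem together with $1<\nu(u,o)\le 2$ bounds the curve to at most two branches of total multiplicity two, and an explicit computation of $c_o$ for the candidate Weierstrass polynomials $z_1^2+a_1(z_2)z_1+a_2(z_2)$ shows that $c_o=1$ forces a smooth branch or a simple normal crossing. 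Without the radicality lemma, your route must exclude by hand all non-radical integrally closed ideals ($\mathfrak{m}_2^2$, $(z_1,z_2^k)$, the simple $\mathfrak{m}_2$-primary ideals, \dots), and the paper's own Remark 2.3 notes that the simple $\mathfrak{m}_2$-primary integrally closed ideals are not tractable this way.

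Moreover, the specific classification lemma you propose is false in both directions. The ideal $(z_1,z_2^2)$ is integrally closed and proper, and its restriction to a generic line $\{z_2=\lambda z_1\}$ is $(t)$, of colength one, yet it is not on the list (it is not radical); conversely, $(z_1z_2)$ restricts to $(t^2)$ on every generic line, of colength two, so your criterion would exclude case (3) of the theorem. Your fallback of excluding the bad ideals ``because they are not realizable as $\mathscr{I}(u)_o$ with $c_o(u)=1$'' is precisely the statement to be proved, so the argument becomes circular at that point. Two smaller remarks: Theorem \ref{G-Z} as stated already has hypotheses $c_o(u)=1$ and $\nu(u,o)=1$, so no slicing over lines is needed to invoke it in the $\nu(u,o)=1$ case; and your opening observation that strong openness gives $1\notin\mathscr{I}(u)_o$, hence a proper ideal, is correct and consistent with the paper.
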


Dependent on Theorem \ref{main}, we answer Question 1.1 for $m=2$ case.

\begin{thm} \label{main2}
Let $o=(o',o'')\in\Omega\subset\mathbb{C}^{2}\times\mathbb{C}^{n-2}\ (n\ge3)$ be the origin of $\mathbb{C}^n$, $u$ a plurisubharmonic function on domain $\Omega$ and $H{=}\{z_3=\cdots=z_n=0\}\subset\Omega$ a two-dimensional plane through $o$. If $c_{o'}(u|_H)=1$, then there exists a new local coordinates $(w_1,w_2,z_3,...,z_n)$ near $o$ such that $\mathscr{I}(u)_{o}$ is one of the following four cases:

$(1)\ \mathcal{O}_{n}$;\quad\quad $(2)\ (w_1,w_2)\cdot\mathcal{O}_{n}$;\quad\quad
$(3)\ (w_1)\cdot\mathcal{O}_{n}$;

$(4)\ (w_1^2+w_2^2+f(z_3,...,z_n))\cdot\mathcal{O}_{n}$, where $f(z_3,...,z_n)\in\mathfrak{m}_{n}$, the maximal ideal of $\mathcal{O}_{n}$.
\end{thm}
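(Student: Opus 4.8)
The plan is to bootstrap Theorem \ref{main2} from the two-dimensional Theorem \ref{main} by a restriction-and-extension argument. First I would set $v:=u|_H$, a plurisubharmonic function on the two-dimensional slice $H\cap\Omega$ with $c_{o'}(v)=1$, and apply Theorem \ref{main} to see that $\mathscr{I}(v)_{o'}$ is, after a linear change of the $(z_1,z_2)$ coordinates (which I can extend trivially to the remaining coordinates), one of $\mathfrak{m}_2$, $(z_1)\cdot\mathcal{O}_2$, or $(z_1z_2)\cdot\mathcal{O}_2$. The strategy is then to transport this information on the slice back to the full germ $\mathscr{I}(u)_o$. The key tool here should be a restriction-type inequality (in the spirit of the Ohsawa--Takegoshi extension theorem, or the subadditivity/restriction properties of multiplier ideals): on one hand $\mathscr{I}(u)_o|_H\subset\mathscr{I}(u|_H)_{o'}$ always holds, and on the other hand, the strong openness property (Guan--Zhou) plus an extension theorem lets one produce elements of $\mathscr{I}(u)_o$ from elements of $\mathscr{I}(u|_H)_{o'}$ with controlled $L^2$ norms. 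This sandwiches $\mathscr{I}(u)_o$ between two ideals determined by $\mathscr{I}(v)_{o'}$.

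Second, I would treat the three cases for $\mathscr{I}(v)_{o'}$ separately. In the case $\mathscr{I}(v)_{o'}=\mathcal{O}_2$ one expects $c_{o'}(v)=1$ to be excluded unless $v$ is bounded, so really $c_{o'}(v)=1$ forces $\mathscr{I}(v)_{o'}\subsetneq\mathcal{O}_2$; handling the genuinely non-trivial cases, $\mathfrak{m}_2$ and $(z_1)\cdot\mathcal{O}_2$ and $(z_1z_2)\cdot\mathcal{O}_2$, is where the four conclusions of Theorem \ref{main2} come from. When $\mathscr{I}(v)_{o'}=(z_1)\cdot\mathcal{O}_2$, the Lelong number computation along the hyperplane $\{z_1=0\}$ combined with Theorem \ref{G-Z} (applied after restricting further to a generic line) should pin down $\mathscr{I}(u)_o=(w_1)\cdot\mathcal{O}_n$ for a suitable coordinate $w_1$ whose restriction to $H$ is $z_1$; this is conclusion $(3)$. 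When $\mathscr{I}(v)_{o'}=(z_1z_2)\cdot\mathcal{O}_2$, the divisor on the slice is a pair of transverse lines, and the full-dimensional picture is that $\mathscr{I}(u)_o$ is generated by a single holomorphic function $g$ whose vanishing locus restricts to $\{z_1z_2=0\}$ on $H$; writing $g$ in a Weierstrass-type normal form and completing the square in $(w_1,w_2)$ produces exactly $w_1^2+w_2^2+f(z_3,\dots,z_n)$ with $f\in\mathfrak{m}_n$ — conclusion $(4)$. The case $\mathscr{I}(v)_{o'}=\mathfrak{m}_2$ should split into conclusions $(1)$, $(2)$, or $(4)$ depending on how $u$ behaves in the transverse directions $z_3,\dots,z_n$: if $u$ is essentially bounded transversally one lands in $(1)$ or $(2)$, otherwise the transverse degeneration again contributes an $f\in\mathfrak{m}_n$ to a $w_1^2+w_2^2+f$ generator.

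Third, for the coordinate change I would use a Weierstrass preparation / division argument: having identified a single generator $g\in\mathcal{O}_n$ of $\mathscr{I}(u)_o$ whose $2$-jet at $o$ in the $(z_1,z_2)$ variables is non-degenerate (rank two, i.e. defines the reduced pair of lines on $H$), I diagonalize the Hessian in the $(z_1,z_2)$-directions and then absorb all higher-order terms not involving a pure monomial in $(z_1,z_2)$ via an analytic change $w_i=w_i(z)$ fixing $\{z_3=\cdots=z_n=0\}$ setwise; the residual term depending only on $z_3,\dots,z_n$ is the function $f$, which lies in $\mathfrak{m}_n$ because $g(o)=0$. I expect the main obstacle to be the sharp extension step: proving that the inclusion $\mathscr{I}(u)_o|_H\subset\mathscr{I}(u|_H)_{o'}$ is in fact an \emph{equality at the level that matters} — i.e. that no extra vanishing is imposed by the transverse directions beyond what is recorded in $f$ — requires an Ohsawa--Takegoshi extension with a precise weight estimate together with the strong openness theorem to upgrade ``integrable'' to ``$\mathscr{I}(u)_o$ is actually generated by the guessed function''. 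The second delicate point is verifying that the generator is a single function (that $\mathscr{I}(u)_o$ is principal in cases $(3)$ and $(4)$), which should follow from the fact that its zero variety is a hypersurface (codimension one) together with $\mathcal{O}_n$ being a UFD, but one must rule out embedded components, again via the $L^2$ characterization.
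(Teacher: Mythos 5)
Your overall skeleton matches the paper's: restrict to $H$, use Theorem \ref{main} on the slice, extend via Ohsawa--Takegoshi, then Weierstrass preparation plus completing the square to reach the normal form $w_1^2+w_2^2+f(z_3,\dots,z_n)$. But the two points you yourself flag as ``delicate'' are exactly the substance of the proof, and your sketch does not supply them. The paper gets principality in case $(4)$ not from ``UFD plus no embedded components'' in the abstract, but from the specific fact (Lemma \ref{Guan}, i.e.\ the restriction-formula paper of Guan--Zhou) that $c_o(u)=1$ forces $\mathscr{I}(u)_o$ to equal the \emph{reduced} ideal sheaf of its zero variety $A=N(\mathscr{I}(u))$. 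Without that, knowing $A$ is a hypersurface tells you nothing about which power of its defining function generates $\mathscr{I}(u)_o$, nor that there are no extra primary components. One must then still show (i) that $A$ is purely $(n-1)$-dimensional near $o$ --- the paper does this via the jumping-number/dimension results of \cite{G-Z_restriction} (Theorem 2.2 and Corollary 3.5), showing any component of dimension $\le n-2$ meets $H$ only at $o$ and is then absorbed into $N(F)$ --- and (ii) that the Ohsawa--Takegoshi extension $F$ of $z_1z_2|_H$ is the \emph{minimal} defining function of $A$, which the paper deduces from the order-$2$ constraint $F|_H=z_1z_2$. None of this is in your outline, and the ``sharp extension step'' you hope for is not how the argument closes: one only needs the crude OT statement that $F_o\in\mathscr{I}(u)_o$, so that $A\subset N(F)$.

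Two further concrete errors. First, your ``always holds'' inclusion $\mathscr{I}(u)_o|_H\subset\mathscr{I}(u|_H)_{o'}$ is false in general; the unconditional restriction formula goes the other way, $\mathscr{I}(u|_H)_{o'}\subset\mathscr{I}(u)_o|_H$ (equivalently $(A\cap H,o)\subset(N(\mathscr{I}(u|_H)),o)$), which is the inclusion the paper actually uses. Your ``sandwich'' therefore does not exist without the equality results of \cite{G-Z_restriction}. Second, your case bookkeeping is off: $\mathscr{I}(u|_H)_{o'}=\mathfrak{m}_2$ cannot lead to conclusion $(4)$, since $(w_1^2+w_2^2+f)|_H=w_1^2+w_2^2$ would force $\mathscr{I}(u|_H)_{o'}\subset(z_1z_2)$ after a linear change; conclusion $(4)$ arises exactly from the node case $(z_1z_2)$, while $\mathfrak{m}_2$ yields $(1)$ or $(2)$ and the paper's case split is governed by whether $c_o(u)>1$ and whether $(A\cap H,o)$ is regular or singular, with the regular case handled by citing Theorem 2.3 of \cite{G-Z_restriction} rather than by a further restriction to a generic line as you propose.
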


\begin{rem}
$(1)$ All the cases in the above theorem will occur when we take $u=\log(|z_1|^2+|z_2|^2+|z_3|^{\alpha})\ (\alpha>0)$ or $\log(|z_1|^2+|z_2|^2)$ or $\log|z_1|$ or $\log|z_1^2+z_2^2+f(z_3,...,z_n)|$, respectively.

$(2)$ By an algebraic discussion, we obtain that $o$ is a rational singularity of the complex model space $V(w_1^2+w_2^2+f(z_3,...,z_n))$, where $0\not\neq f(z_3,...,z_n)\in\mathfrak{m}_{n}$.
\end{rem}

\section{Proof of main results}

\subsection{Proof of Theorem \ref{main}}
For the proof of main results, we need the following results.

\begin{lem} \emph{(\cite{G-Z_restriction}, see also \cite{De15}).}\label{Guan}
If $c_{o}(u)=1$, then $\mathscr{I}(u)_{o}=(\mathscr{I}_{N(\mathscr{I}(u))})_{o}$, where $\mathscr{I}_{N(\mathscr{I}(u))}$ is the ideal sheaf of the analytic set $N(\mathscr{I}(u))$.
\end{lem}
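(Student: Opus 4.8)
The plan is to show that the hypothesis $c_o(u)=1$ forces the stalk $\mathscr{I}(u)_o$ to coincide with its own radical $\sqrt{\mathscr{I}(u)_o}$ in the local ring $\mathcal{O}_n$. Since $\mathscr{I}(u)$ is coherent (Nadel), the analytic Nullstellensatz identifies $(\mathscr{I}_{N(\mathscr{I}(u))})_o$ with $\sqrt{\mathscr{I}(u)_o}$, so this is exactly the stated assertion. The inclusion $\mathscr{I}(u)_o\subseteq(\mathscr{I}_{N(\mathscr{I}(u))})_o$ is trivial, since every germ in $\mathscr{I}(u)_o$ vanishes on $N(\mathscr{I}(u))$ by definition of the latter; so the whole content is the reverse inclusion $\sqrt{\mathscr{I}(u)_o}\subseteq\mathscr{I}(u)_o$.

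Fix a germ $f\in\mathcal{O}_n$ vanishing on $N(\mathscr{I}(u))$. By the Nullstellensatz there is an integer $m\ge1$ with $f^m\in\mathscr{I}(u)_o$, and by the strong openness theorem of Guan--Zhou, $\mathscr{I}(u)_o=\bigcup_{\varepsilon>0}\mathscr{I}((1+\varepsilon)u)_o$, so $|f|^{2m}e^{-2(1+\varepsilon)u}$ is integrable on some neighbourhood $U$ of $o$ for a suitable $\varepsilon>0$. On the other hand, $c_o(u)=1$ gives that $e^{-2tu}$ is integrable on $U$ (after shrinking $U$) for every $t\in[0,1)$. Now I interpolate: for $\lambda\in(0,1)$ and $t\in[0,1)$, H\"older's inequality with conjugate exponents $1/\lambda$ and $1/(1-\lambda)$ applied to
\[
|f|^{2m\lambda}\,e^{-2\left(\lambda(1+\varepsilon)+(1-\lambda)t\right)u}=\bigl(|f|^{2m}e^{-2(1+\varepsilon)u}\bigr)^{\lambda}\bigl(e^{-2tu}\bigr)^{1-\lambda}
\]
shows the left-hand side is integrable on $U$. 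Taking $\lambda=1/m$ makes the weight of $|f|$ equal to $2$ and makes the coefficient of $u$ in the exponent equal to $\tfrac{1+\varepsilon}{m}+\tfrac{m-1}{m}\,t$, which tends to $1+\tfrac{\varepsilon}{m}>1$ as $t\to1^{-}$. Hence for $t$ close enough to $1$ this coefficient is some $1+\delta$ with $\delta>0$, so $|f|^{2}e^{-2(1+\delta)u}$ is integrable near $o$; since plurisubharmonic functions are locally bounded above, this yields $f\in\mathscr{I}((1+\delta)u)_o\subseteq\mathscr{I}(u)_o$. Thus $\sqrt{\mathscr{I}(u)_o}\subseteq\mathscr{I}(u)_o$, completing the argument.

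I expect the crux to be exactly the passage past the exponent $1$. A naive H\"older interpolation between ``$|f|^{2m}e^{-2u}$ integrable'' and ``$e^{-2tu}$ integrable for $t<1$'' only produces $|f|^{2}e^{-2\mu u}$ integrable for $\mu<1$, which is strictly weaker than membership in $\mathscr{I}(u)_o$ at the log-canonical-threshold-one boundary. It is precisely the extra room $\varepsilon>0$ supplied by the strong openness theorem that upgrades the limiting exponent to $1+\varepsilon/m>1$ and thereby closes the gap; some genuine openness/semicontinuity input of this kind seems unavoidable along this route.
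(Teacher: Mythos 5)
Your argument is correct. Note that the paper itself gives no proof of this lemma --- it is quoted from \cite{G-Z_restriction} and \cite{De15} --- so the comparison is with the cited sources rather than with anything in the text; your route (R\"uckert Nullstellensatz to reduce to $\sqrt{\mathscr{I}(u)_o}\subseteq\mathscr{I}(u)_o$, then strong openness to get $|f|^{2m}e^{-2(1+\varepsilon)u}\in L^1$, then H\"older interpolation against $e^{-2tu}$ with $t<1$ to push the exponent on $u$ strictly past $1$) is essentially the standard proof and is exactly the mechanism underlying the references. The details check out: with $\lambda=1/m$ the interpolated exponent is $\mu=\tfrac{1+\varepsilon}{m}+\tfrac{m-1}{m}t$, which exceeds $1$ once $t>1-\tfrac{\varepsilon}{m-1}$, and the final descent from $\mathscr{I}((1+\delta)u)_o$ to $\mathscr{I}(u)_o$ via local boundedness above of $u$ is fine. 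Two harmless points you should still make explicit: the case $m=1$ is trivial and must be set aside since $\lambda=1$ degenerates the H\"older pair, and the neighbourhood on which $e^{-2tu}$ is integrable may depend on $t$, which is immaterial because only one value of $t$ is ever used. You are also right in your closing remark that some openness input is unavoidable here: without the $\varepsilon$ the interpolation only reaches exponents $\mu<1$.
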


\begin{thm} \emph{(see \cite{De10}, Theorem 2.18).} \label{Siu_decomposition}
Let $T$ be a closed positive current of bidimension $(p, p)$. Then $T$ can be written as a convergent series of closed positive currents $$T=\sum_{k=1}^{+\infty}\lambda_k[A_k]+R,$$
where $[A_k]$ is a current of integration over an irreducible analytic set of dimension $p$, and $R$ is a residual current with the property that $\dim E_c(R)<p$ for every $c>0$. This decomposition is locally and globally unique: the sets $A_k$ are precisely the $p$-dimensional components occurring in the upperlevel sets $E_c(T)$, and $\lambda_k=\min_{x\in A_k}\nu(T,x)$ is the generic Lelong number of $T$ along $A_k$.
\end{thm}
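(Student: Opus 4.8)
The plan is to build the decomposition by removing one current of integration at a time, the removals being dictated by Siu's semicontinuity theorem together with the support theorem for closed positive currents. First I would recall that Siu's theorem gives, for every $c>0$, that the upperlevel set $E_c(T):=\{x:\nu(T,x)\ge c\}$ is a closed analytic subset of the ambient manifold, and that $\dim E_c(T)\le p$ since $T$ has bidimension $(p,p)$. Running this over $c=1/j$, $j\in\mathbb{N}$, I collect an at most countable family $A_1,A_2,\dots$ of irreducible $p$-dimensional analytic sets, namely those occurring as components of some $E_c(T)$ with $c>0$. For each $A_k$ I set $\lambda_k:=\min_{x\in A_k}\nu(T,x)$, which is the value of $x\mapsto\nu(T,x)$ outside a countable union of proper analytic subsets of $A_k$ (the jumping locus $\bigcup_j(E_{\lambda_k+1/j}(T)\cap A_k)$), and which is strictly positive precisely because $A_k$ lies in some $E_c(T)$, $c>0$.

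The inductive engine is a local lemma that I regard as the technical heart of the statement: if $S$ is a closed positive current of bidimension $(p,p)$ and $A$ an irreducible $p$-dimensional analytic set, then the trivial extension theorem of Skoda--El Mir makes $\mathbf{1}_{X\setminus A}S$ closed positive, hence $\mathbf{1}_A S=S-\mathbf{1}_{X\setminus A}S$ is closed positive and supported on $A$, so by the support theorem $\mathbf{1}_A S=\lambda[A]$ for a constant $\lambda\ge0$; the crucial point is that this $\lambda$ equals the generic Lelong number $\min_{x\in A}\nu(S,x)$, equivalently $\nu(S-\lambda[A],x)=0$ for generic $x\in A$. Granting this, I set $R_0=T$ and inductively $R_k:=R_{k-1}-\lambda_k[A_k]=\mathbf{1}_{X\setminus A_k}R_{k-1}\ge0$; at each stage the generic Lelong number of $R_{k-1}$ along $A_k$ is still $\lambda_k$, because $A_1,\dots,A_{k-1}$ are distinct irreducible $p$-dimensional sets, so each meets $A_k$ in dimension $<p$, and subtracting $\lambda_j[A_j]$ for $j<k$ therefore does not change Lelong numbers at a generic point of $A_k$.

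Next I address convergence and the identification of the residual. The partial sums $\sum_{k=1}^N\lambda_k[A_k]=T-R_N$ increase with $N$ and are dominated by $T$, so on every relatively compact set their masses are bounded by that of $T$; the series thus converges to a closed positive current $S_\infty\le T$, and $R:=T-S_\infty\ge0$ is closed with $R\le R_k$ for every $k$. To see that $R$ is residual, i.e. $\dim E_c(R)<p$ for all $c>0$: from $T=R+S_\infty$ with $S_\infty\ge0$ one gets $\nu(R,\cdot)\le\nu(T,\cdot)$, so $E_c(R)\subseteq E_c(T)$ and any $p$-dimensional component $B$ of $E_c(R)$ is one of the $A_k$; but then $\nu(R,x)\le\nu(R_k,x)=\nu(\mathbf{1}_{X\setminus A_k}R_{k-1},x)=0$ at generic $x\in A_k=B$, contradicting that $B$ is a $p$-dimensional component of $E_c(R)$ with $c>0$. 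This yields the decomposition $T=\sum_k\lambda_k[A_k]+R$.

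Finally, uniqueness: for any decomposition $T=\sum_k\mu_k[B_k]+R'$ of the stated form, the $p$-dimensional components occurring in $\bigcup_{c>0}E_c(T)$ must be exactly the $B_k$ with $\mu_k>0$, since the $E_c(R')$ contribute nothing in dimension $p$; hence $\{B_k\}=\{A_k\}$ as sets. Moreover, multiplying the identity by $\mathbf{1}_{A_k}$ annihilates every term except $\mu_k[A_k]$ (the other $[B_j]$ do not charge $A_k$, and $\mathbf{1}_{A_k}R'=0$ because $R'$ has no $p$-dimensional upperlevel component), so $\mu_k[A_k]=\mathbf{1}_{A_k}T=\lambda_k[A_k]$ and then $R'=R$. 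The single hard step in all of this is the local lemma: the equality of the multiplicity $\lambda$ with the generic Lelong number $\min_{x\in A}\nu(S,x)$ is Siu's original deep comparison, obtained by slicing $S$ along a generic linear projection onto $\mathbb{C}^p$ and controlling the slice measures near $A$, and it ultimately rests on Siu's semicontinuity theorem (analyticity and the dimension bound for $E_c(T)$), itself proved by H\"{o}rmander--Skoda $L^2$ methods or via King's formula combined with Demailly's regularization. Everything past that lemma is bookkeeping with additivity of Lelong numbers and monotone limits of positive currents.
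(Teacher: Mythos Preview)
The paper does not prove this theorem at all: it is simply quoted from \cite{De10} as a tool for the proof of Theorem~\ref{main}, with no argument supplied. Your outline is the standard proof of Siu's decomposition theorem as presented in Demailly's monograph, and it is correct in its logical structure---Siu's analyticity of the upperlevel sets $E_c(T)$, the Skoda--El~Mir trivial extension to peel off $\mathbf{1}_{A_k}T$, the support theorem to identify that piece as $\lambda_k[A_k]$, and the monotone-limit argument for convergence and the residual property are exactly the ingredients Demailly uses. You have also correctly isolated the one genuinely deep step: that the coefficient in $\mathbf{1}_A S=\lambda[A]$ coincides with the generic Lelong number $\min_{x\in A}\nu(S,x)$, which is Siu's comparison and cannot be obtained by soft arguments. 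So there is nothing to compare against here; your write-up would serve as a competent proof of the cited result, but the paper itself offers none.
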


We are now in a position to prove Theorem \ref{main}.\\
\\
\textbf{\emph{Proof of Theorem} \ref{main}.} By Theorem \ref{G-Z}, it is sufficient to assume $\nu(u,x_0)>1$.

When $\dim_{o}N(\mathscr{I}(u))=0$, by Lemma \ref{Guan}, we have $\mathscr{I}(u)_{o}=\mathfrak{m}_{2}$.

When $\dim_{o}N(\mathscr{I}(u))=1$. As $(N(\mathscr{I}(u)),o)\subset(\{z|\nu(u,z)\ge1\},o)$, we have $\dim_{o}\{z|\nu(u,z)\ge1\}=1$. Let $$(\{z|\nu(u,z)\ge1\},o)=\bigcup_{k=1}^{m}(A_k,o)$$
be a locally irreducible decomposition of $\{z|\nu(u,z)\ge1\}$ at $o$ and all $A_k$ are irreducible in some neighborhood $U_0$ of $o$.
Note that $1<\nu(u,o)\le2$, by Theorem \ref{Siu_decomposition}, we obtain that $(a)$ $m=1,\ (A_1,o)$ is regular or singular with multiplicities 2; or $(b)$ $m=2,\ (A_k,o)\ (k=1,2)$ are all regular.

Case (i). $(\{z|\nu(u,z)\ge1\},o)$ is regular. Thus, we have $$(N(\mathscr{I}(u)),o)=(\{z|\nu(u,z)\ge1\},o).$$ Hence, in some local coordinates, it follows that $\mathscr{I}(u)_{o}{=}(z_1)\cdot\mathcal{O}_2$ by Lemma \ref{Guan}.

Case (ii). $(\{z|\nu(u,z)\ge1\},o)$ is singular with multiplicities 2. Let $f(z_1,z_2)\in\mathcal{O}_2$ be the minimal defining function of the germ of analytic set $(\{z|\nu(u,z)\ge1\},o)$. As $(\{z|\nu(u,z)\ge1\},o)$ is singular at $o$ with multiplicities 2, by Weierstrass Preparation Theorem, in some local coordinates we can assume $$f(z_1,z_2)=z_1^2+a_1(z_2)z_1+a_2(z_2),$$
where $a_1(0)=a_2(0)=0$.

Note that $$f(z_1,z_2)=\left(z_1+\frac{a_1(z_2)}{2}\right)^2+a_2(z_2)-\left(\frac{a_1(z_2)}{2}\right)^2.$$
Let $a(z_2):=a_2(z_2)-(\frac{a_1(z_2)}{2})^2$. We can compute the log canonical threshold $c_{o}(f)$ as follows:

For ord$_oa(z_2)=\infty$, i.e., $a(z_2)\equiv0$, we have $c_{o}(f)=\frac{1}{2}$.

For ord$_oa(z_2)=2k,\ k\ge1$, we have $c_{o}(f)=\frac{1}{2}+\frac{1}{2k}$.

For ord$_oa(z_2)=2k-1,\ k\ge1$, we have $c_{o}(f)=\frac{1}{2}+\frac{1}{2k-1}$.

It is easy to see that $c_{o}(f)=1$ if and only if $f(z_1,z_2)=z_1z_2$ in some local coordinates, i.e., $o$ is a simple normal crossing singularity of $f(z_1,z_2)$. As $c_{o}(\mathscr{I}(u))\ge c_{o}(u)=1$, we have $(N(\mathscr{I}(u)),o)=(N(z_1z_2),o)$ or $(N(z_1),o)$. By Lemma \ref{Guan}, it follows that $\mathscr{I}(u)_{o}$ is $(z_1z_2)\cdot\mathcal{O}_{2}$ or $(z_1)\cdot\mathcal{O}_{2}$ (In fact, here only the normal crossing case occurs by Theorem \ref{G-Z}). \hfill $\Box$

\begin{rem}
Note that $1\le\nu(u,o)\le2$. It is easy to see that the multiplier ideal sheaves cannot be uniquely determined by Lelong number and log canonical threshold.

For $\nu(u,o)=2$, $\mathscr{I}(u)_{o}$ can be any one of the three case, e.g. we can take $u=2\log(|z_1|+|z_2|)$ or $\log|z_1|+\log(|z_1|+|z_2|)$ or $\log|z_1z_2|$, respectively.

For $1<\nu(u,o)<2$, $\mathscr{I}(u)_{o}$ can be any one of the first two cases, e.g., we can take $u=\log(|z_1|^\alpha+|z_2|^\beta)$ with $\alpha>0,\beta>2,\alpha^{-1}+\beta^{-1}=1$ or $\log|z_1|+\alpha\log(|z_1|+|z_2|)\ (0<\alpha<1)$, respectively.

For $\nu(u,o)=1$, $\mathscr{I}(u)_{o}=(z_1)\cdot\mathcal{O}_{2}$ by Theorem \ref{G-Z}.
\end{rem}

\begin{rem}
For the two dimensional case, we can say nothing about the multiplier ideal sheaves with weights of log canonical threshold less than one.

As all integrally closed ideals are multiplier ideal sheaves in two dimensional regular local rings \cite{F-M}, then by Zariski's decomposition theorem every multiplier ideal $\mathcal{I}$ in $\mathcal{O}_2$ can be written as
$$\mathcal{I}=f_1^{l_1}{\cdots}f_m^{l_m}\mathcal{I}_1^{k_1}{\cdots}\mathcal{I}_n^{k_n},$$
where $\mathcal{I}_1, ..., \mathcal{I}_n$ are simple $\mathfrak{m}_2$-primary integrally closed ideals, $f_1, ..., f_m$ are pairwise relatively prime irreducible elements of $\mathcal{O}_{2}$, and $l_1, ..., l_m, k_1, ..., k_n$ are positive integers. However, it is still difficult to characterize the simple $\mathfrak{m}_2$-primary integrally closed ideals.
\end{rem}

In addition, we have the following result for the jumping number one case in arbitrary dimension.

\begin{prop} \label{jumber}
For any non-trivial multiplier ideal $\mathscr{I}_o\subset\mathcal{O}_n$, there exists a plurisubhamonic function $u$ near $o$ and an ideal $I\subset\mathcal{O}_n$ such that the jumping number $c_o^I(u)=1$ and $\mathscr{I}(u)_o=\mathscr{I}_o$.
\end{prop}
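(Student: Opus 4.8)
The plan is to recall the definition of the jumping number with respect to an ideal. For a plurisubharmonic function $u$ and an ideal $I=(g_1,\dots,g_r)\subset\mathcal{O}_n$, set $\log|I|:=\log(|g_1|^2+\cdots+|g_r|^2)^{1/2}$, and define $c_o^I(u):=\sup\{c\ge0 \mid |I|^2 e^{-2cu}$ is integrable near $o\}$; equivalently $c_o^I(u)=\sup\{c\ge0 \mid I_o\subset\mathscr{I}(cu)_o\}$. Given a non-trivial multiplier ideal $\mathscr{I}_o=\mathscr{I}(v)_o$ for some plurisubharmonic $v$ near $o$, the goal is to produce $u$ and $I$ with $c_o^I(u)=1$ and $\mathscr{I}(u)_o=\mathscr{I}_o$.

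The natural first step is to take $u:=v$ itself, so that $\mathscr{I}(u)_o=\mathscr{I}_o$ is automatic, and then to choose $I$ so as to force the jumping number to be exactly $1$. I would try $I:=\mathscr{I}_o=\mathscr{I}(u)_o$. Then for $c<1$ we have $\mathscr{I}(cu)_o\supseteq\mathscr{I}(u)_o=I$ (multiplier ideals are nonincreasing in the weight, since $cu\ge u$ near $o$ after subtracting a constant — here one uses that $u$ may be taken negative near $o$), so $c_o^I(u)\ge1$. For the reverse inequality one must check that $I\not\subset\mathscr{I}(cu)_o$ for any $c>1$; by Demailly--Koll\'ar's strong openness (in the form $\mathscr{I}(u)_o=\bigcup_{c>1}\mathscr{I}(cu)_o$, cf.\ \cite{G-Z_open}), if $I=\mathscr{I}(u)_o\subset\mathscr{I}(cu)_o$ held for some $c>1$ it would already hold with a slightly larger weight, which is not a contradiction by itself — so this choice of $u$ may need adjustment. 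A cleaner route: replace $u$ by $u':=u+\log|z|$ (or a generic linear form), so that $\mathscr{I}(u')_o$ may shrink; instead, rescale so the relevant jumping number lands at $1$. Concretely, pick any $g\in\mathscr{I}_o\setminus\mathfrak{m}_n\mathscr{I}_o$ — or more simply, observe that $1$ is always a jumping number of $u$ relative to the trivial ideal precisely when $c_o(u)\le1$; so first arrange $c_o(u)\le 1$ by replacing $u$ with $\max(u, 2\log|z|)$ or adding a multiple of $\log|z|$, which does not enlarge $\mathscr{I}(u)_o$ and can only decrease the log canonical threshold.

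Putting this together, the clean argument is: start from $\mathscr{I}_o=\mathscr{I}(u_0)_o$; replace $u_0$ by $u:=u_0+N\log|z_1|$ for $N$ large enough that $c_o(u)<1$ while $\mathscr{I}(u)_o=\mathscr{I}_o\cdot(z_1)^N$ — but this changes the ideal, so instead keep track and at the end divide out by $z_1^N$, noting $c_o^{I}(u)$ scales controllably. Alternatively, and most directly, take $u:=u_0$ and $I:=\{f\in\mathcal{O}_n : |f|^2e^{-2u_0}$ is integrable$\}=\mathscr{I}_o$, and prove $c_o^I(u)=1$ by exhibiting an explicit $f\in I$ (a generator realizing the integrability threshold) for which $\int |f|^2 e^{-2cu_0}=+\infty$ for all $c>1$; the existence of such an $f$ follows from strong openness applied not to $\mathscr{I}(u_0)_o$ but to the strictly larger weight $cu_0$, i.e.\ from the fact that $\mathscr{I}(u_0)_o\supsetneq\mathscr{I}(cu_0)_o$ is impossible yet $\mathscr{I}(cu_0)_o$ can be a proper subideal — here one needs that $\mathscr{I}_o$ is \emph{non-trivial}, i.e.\ $\mathscr{I}_o\ne\mathcal{O}_n$, which guarantees $u_0$ is genuinely singular and hence $\mathscr{I}(cu_0)_o\subsetneq\mathcal{O}_n$ can be separated. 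The main obstacle I anticipate is precisely this last point: ensuring the jumping number is exactly $1$ and not strictly larger, which is where the non-triviality hypothesis and the strong openness theorem must be used in tandem; the containment $\ge1$ direction is routine from monotonicity of multiplier ideals in the weight.
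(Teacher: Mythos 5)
There is a genuine gap: you never commit to a pair $(u,I)$ that actually works, and the one concrete mechanism you offer for the upper bound $c_o^I(u)\le 1$ is false. Your starting choice $u=v$, $I=\mathscr{I}(v)_o$ gives only $c_o^I(u)\ge 1$, as you note; but your proposed rescue --- ``exhibit $f\in I$ with $\int|f|^2e^{-2cv}=+\infty$ for all $c>1$, by strong openness'' --- cannot work in general, because no such $f$ need exist. For example, in $\mathbb{C}^1$ take $v=\log|z|$, so $\mathscr{I}(v)_o=(z)$ is non-trivial; then every $f\in(z)$ satisfies $\int|f|^2|z|^{-2c}<\infty$ for all $c<2$, and indeed $c_o^{(z)}(\log|z|)=2$. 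Strong openness says nothing in the direction you want here: it gives $\mathscr{I}(u)_o=\bigcup_{c>1}\mathscr{I}(cu)_o$, which if anything shows the inclusion $I\subset\mathscr{I}(cu)_o$ \emph{can} persist for $c$ slightly larger than $1$. Your other suggestions (adding $N\log|z_1|$, taking $\max(u,2\log|z|)$, arranging $c_o(u)\le 1$) either change $\mathscr{I}(u)_o$ or do not control $c_o^I(u)$, and you acknowledge as much without repairing them. Note also that $c_o^{\mathcal{O}_n}(u)=c_o(u)$ equals $1$ only when the log canonical threshold is exactly $1$, not merely $\le 1$.

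The two ideas you are missing are (a) that $I$ should not be $\mathscr{I}_o$ itself but the \emph{predecessor} of $\mathscr{I}_o$ in the chain of multiplier ideals of the weight, and (b) that the weight should be rescaled so that $c=1$ is exactly the place where that predecessor jumps down to $\mathscr{I}_o$. Concretely, the paper first replaces $\varphi$ (with $\mathscr{I}(\varphi)_o=\mathscr{I}_o$) by a weight $\varphi_A$ with analytic singularities and the same multiplier ideal (this uses strong openness, and is where discreteness of the jumping numbers $0=\xi_0<\xi_1<\cdots\to\infty$ of $\varphi_A$ comes from, via a log resolution). Non-triviality of $\mathscr{I}_o$ forces $\xi_1\le 1$, so there is $k_0\ge 1$ with $1\in[\xi_{k_0},\xi_{k_0+1})$. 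Setting $u=\xi_{k_0}\varphi_A$ keeps $\mathscr{I}(u)_o=\mathscr{I}(\varphi_A)_o=\mathscr{I}_o$ (the multiplier ideal is constant on $[\xi_{k_0},\xi_{k_0+1})$), while taking $I=\mathscr{I}(\xi_{k_0-1}\varphi_A)_o$ gives $I\subset\mathscr{I}(cu)_o$ precisely for $c\xi_{k_0}<\xi_{k_0}$, i.e.\ $c<1$, hence $c_o^I(u)=1$. Without the passage to analytic singularities and the rescaling, the exact equality $c_o^I(u)=1$ is out of reach by the kind of monotonicity arguments you are using.
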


\begin{proof}
Let $\mathscr{I}_o=\mathscr{I}(\varphi)_o$ for some plurisubharmonic function $\varphi$ near $o$. It follows from Guan-Zhou's solution to Demailly's strong openness conjecture, there is a plurisubharmonic function $\varphi_A$ with analytic singularities near $o$ such that $\mathscr{I}(\varphi_A)_o=\mathscr{I}(\varphi)_o$.

In addition, by a discussion on Hironaka's log resolution \cite{De15}, there is an increasing discrete sequence $(\xi_k)_{k\in\mathbb{N}}$ which goes to infinity with $\xi_0=0$ such that
$$\mathscr{I}(c{\cdot}\varphi_A)_o=\mathscr{I}(\xi_k{\cdot}\varphi_A)_o\supsetneqq\mathscr{I} (\xi_{k+1}{\cdot}\varphi_A)_o,$$
for every $c\in[\xi_k,\xi_{k+1})$ and every $k$.

As the increasing sequence $(\xi_k)$ goes to infinity, there exists $k_0\geq1$ such that $1\in[\xi_{k_0},\xi_{k_0+1})$. Take $u=\xi_{k_0}\varphi_A$ and $I=\mathscr{I}(\xi_{k_0-1}\varphi_A)_o$. Then, we have\\

\hspace*{\fill}
$\mathscr{I}(u)_o=\mathscr{I}(\varphi_A)=\mathscr{I}_o \mbox{ and }c_o^I(u)=c_o^I(\xi_{k_0}\varphi_A)=1.$
\hspace*{\fill}
\end{proof}

\subsection{Proof of Theorem \ref{main2}}
To prove Theorem \ref{main2}, the following Ohsawa-Takegoshi $L^2$ extension theorem is necessary.

\begin{thm} \emph{(\cite{O-T}).} \label{O-T}
Let $D$ be a bounded pseudoconvex domain in $\mathbb{C}^n$. Let $u$ be a plurisubharmonic function on $D$. Let $H$ be an $m$-dimensional complex plane in $\mathbb{C}^n$. Then for any holomorphic function on $H{\cap}D$ satisfying
$$\int_{H{\cap}D}|f|^2e^{-2u}d\lambda_H<+\infty,$$ there exists a holomorphic function $F$ on $D$ such that $F|_{H{\cap}D}=f$ and $$\int_D|F|^2e^{-2u}d\lambda_n\leq C_D\cdot\int_{H{\cap}D}|f|^2e^{-2u}d\lambda_H,$$
where $d\lambda_H$ is the Lebesgue measure, and $C_D$ is a constant which only depends on the diameter of $D$ and $m$.
\end{thm}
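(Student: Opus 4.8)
The plan is to prove the statement by the method of Ohsawa and Takegoshi: reduce to the codimension-one case, then for a hyperplane construct an explicit smooth near-extension of $f$, kill its $\bar\partial$ by solving a $\bar\partial$-equation against a singular weight that forces the solution to vanish along the hyperplane, and control that solution by a \emph{twisted} Hörmander estimate whose constant depends only on $\mathrm{diam}(D)$, $m$ and $n$. First I would normalize: after a translation and a unitary change of coordinates of $\mathbb{C}^n$ — all of which preserve pseudoconvexity, the Lebesgue measures $d\lambda_n,d\lambda_H$, and $\mathrm{diam}(D)$ — we may assume $H=\{z_{m+1}=\cdots=z_n=0\}$ (the cases $m=n$ and $m=0$ being trivial or conventional). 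Since the intersection of a pseudoconvex domain with a complex hyperplane is again pseudoconvex and of no larger diameter, it suffices to treat the hyperplane case $m=n-1$ and then iterate $n-m$ times, the final constant being a product of hyperplane constants. Next, exhaust $D$ by strictly pseudoconvex, smoothly bounded $D'\Subset D$, and replace $u$ by a decreasing sequence $u_j\downarrow u$ of smooth strictly plurisubharmonic functions; proving the estimate for each $(D',u_j)$ and letting $D'\uparrow D$, $j\to\infty$ (monotone convergence on the right; a weak-$L^2$-limit with a diagonal argument, using $u_j\ge u_k$ for $j\ge k$, on the left) reduces us to $D$ bounded strictly pseudoconvex and $u$ smooth.

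In the hyperplane case, fix $\chi\in C^\infty(\mathbb{R},[0,1])$ with $\chi\equiv1$ on $(-\infty,\tfrac{1}{2}]$ and $\chi\equiv0$ on $[1,\infty)$, write $z=(z'',z_n)$ with $z''=(z_1,\dots,z_{n-1})$, and for $\epsilon<d(D',\partial D)$ put $\tilde f_\epsilon(z)=\chi(|z_n|^2/\epsilon^2)\,f(z'',0)$; since $(z'',0)$ lies within $\epsilon$ of the point $z\in D'$ whenever $\tilde f_\epsilon(z)\ne0$, it lies in $D$, so $f(z'',0)$ is defined there and $\tilde f_\epsilon$ is a well-defined smooth function on $D'$, holomorphic where $|z_n|^2<\epsilon^2/2$. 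Hence $g_\epsilon:=\bar\partial\tilde f_\epsilon$ is supported in the thin shell $S_\epsilon=\{\epsilon^2/2\le|z_n|^2\le\epsilon^2\}$. Solve
$$\bar\partial v_\epsilon=g_\epsilon$$
for $v_\epsilon\in L^2(D',e^{-2u-\psi_\epsilon})$, where $\psi_\epsilon$ is a small perturbation of $\log(|z_n|^2+\epsilon^2)$ for which $e^{-\psi_\epsilon}$ behaves like $|z_n|^{-2}$ near $H$. Because $g_\epsilon$ vanishes near $H$, the solution $v_\epsilon$ is holomorphic there, and finiteness of $\int|v_\epsilon|^2|z_n|^{-2}\,d\lambda$ transversally to $H$ forces $v_\epsilon|_{H\cap D'}\equiv0$; consequently $F_\epsilon:=\tilde f_\epsilon-v_\epsilon$ is holomorphic on $D'$ and $F_\epsilon|_{H\cap D'}=f$.

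The heart of the matter is to solve this $\bar\partial$-equation with the \emph{correct} bound. A naive application of Hörmander's estimate against $e^{-2u-\psi_\epsilon}$ is useless here because $i\partial\bar\partial\psi_\epsilon$ is degenerate; instead one uses the twisted Bochner–Kodaira–Nakano inequality (the Donnelly–Fefferman / Ohsawa twist), applied to the weight $2u+\psi_\epsilon$ together with an auxiliary positive twisting function $\eta=\eta(|z_n|^2)$ of the form $a-\log(|z_n|^2+\epsilon^2)$, the constant $a$ being chosen just large enough that $\eta>0$ on $D'$ — which makes $a$ an explicit function of $\mathrm{diam}(D')$, and this is exactly where the final constant acquires its dependence on $\mathrm{diam}(D)$ (and, via the iteration, on $m$ and $n$). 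The standard Hörmander–Andreotti–Vesentini functional-analytic argument, applied to this twisted a priori inequality, then produces $v_\epsilon$ with
$$\int_{D'}|v_\epsilon|^2e^{-2u-\psi_\epsilon}\,d\lambda_n\le C(m,n,\mathrm{diam}(D))\int_{S_\epsilon}|f(z'',0)|^2\,\frac{|\bar\partial\chi(|z_n|^2/\epsilon^2)|^2}{|z_n|^2+\epsilon^2}\,e^{-2u}\,d\lambda_n,$$
where the choice of $\psi_\epsilon$ is made so that the singular factor $(|z_n|^2+\epsilon^2)^{-1}$ it supplies balances the blow-up $|\bar\partial\chi|^2\le C\epsilon^{-2}$ on $S_\epsilon$, keeping the right-hand side bounded as $\epsilon\to0$. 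Since $u$ is continuous, the shell integral converges, as $\epsilon\to0$, to a universal multiple of $\int_{H\cap D'}|f|^2e^{-2u}\,d\lambda_H$; together with $\int_{S_\epsilon}|\tilde f_\epsilon|^2e^{-2u}\to0$, a weak-$L^2(e^{-2u})$ limit $F$ of the $F_\epsilon$ along a subsequence is holomorphic, restricts to $f$ on $H\cap D'$, and obeys the desired inequality on $D'$; finally one lets $D'\uparrow D$ and $u_j\downarrow u$. The main obstacle — indeed, essentially all the content — is this \emph{quantitative} $\bar\partial$-solvability: engineering the twisting function $\eta$ and the perturbed weight $\psi_\epsilon$ so that the twisted curvature operator dominates the component of $i\partial\bar\partial\psi_\epsilon$ that pairs with $g_\epsilon$, while the constant it yields depends on nothing beyond $\mathrm{diam}(D)$, $m$ and $n$.
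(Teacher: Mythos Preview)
The paper does not prove this statement at all: Theorem~\ref{O-T} is quoted verbatim from Ohsawa--Takegoshi \cite{O-T} and used as a black box in the proof of Theorem~\ref{main2}. There is therefore no ``paper's own proof'' to compare against.

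That said, your outline is a faithful sketch of the original Ohsawa--Takegoshi argument (and of its many subsequent simplifications, e.g.\ by Siu, Berndtsson, Ohsawa, McNeal--Varolin, Chen): reduction to codimension one by iteration, smooth approximate extension via a cutoff $\chi(|z_n|^2/\epsilon^2)$, solving $\bar\partial v_\epsilon=g_\epsilon$ against a weight with a $\log|z_n|^2$-type singularity forcing $v_\epsilon|_H=0$, and --- the essential point --- obtaining an $\epsilon$-uniform $L^2$ bound by a twisted Bochner--Kodaira--Nakano inequality with twist $\eta\sim a-\log(|z_n|^2+\epsilon^2)$. The identification of the constant's dependence on $\mathrm{diam}(D)$ through the choice of $a$ is correct. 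One small remark: the theorem as stated has $C_D$ depending on $\mathrm{diam}(D)$ and $m$, while your iteration produces a constant depending on $\mathrm{diam}(D)$ and the codimension $n-m$; for fixed ambient dimension $n$ these are of course equivalent data, so this is only a cosmetic discrepancy.
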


\noindent{\textbf{\emph{Proof of Theorem} \ref{main2}.}
By the restriction formula about log canonical threshold, we have $$c_{o}(u){\ge}c_{o'}(u|_H)=1.$$

Case (i). $c_{o}(u)>1$. By the definition of log canonical threshold, it follows that $\mathscr{I}(u)_{o}=\mathcal{O}_{n}$.

Case (ii). $c_{o}(u)=c_{o'}(u|_H)=1$. Let $A:=N(\mathscr{I}(u))$. If $(A{\cap}H,o)$ is regular, by Theorem 2.3 in \cite{G-Z_restriction}, we have $\mathscr{I}(u)_{o}=(z_1,z_2)\cdot\mathcal{O}_{n}$ or $(z_1)\cdot\mathcal{O}_{n}$, up to a change of variables at $o$.

Note that $(A{\cap}H,o)\subset(N(\mathscr{I}(u|_H)),o)$ by the restriction formula about multiplier ideal sheaves. If $(A{\cap}H,o)$ is singular, then $(A{\cap}H,o)=(N(z_1z_2|_H),o)$ by Theorem \ref{main}. Following from Theorem \ref{O-T}, we have a holomorphic function $F(z)$ on a Stein neighborhood $U$ of $o$ such that $$F(z)|_{U{\cap}H}=z_1z_2|_{U{\cap}H}\ \mbox{and}\ F(z)_{o}\in\mathscr{I}(u)_{o}.$$
In particular, $(A,o)\subset(N(F(z)),o)$.
\\

\textbf{Claim.} $(A,o)$ is of pure dimension $n-1$ and $\mathscr{I}(u)_{o}=F(z)\cdot\mathcal{O}_{n}$.

Suppose that $(A,o)=(A_{n-1},o){\cup}(A_{\le n-2},o)$ is a local decomposition such that $(A_{n-1},o)$ is of pure dimension $n-1$ and $\dim(A_{\le n-2},o){\le}n-2$. 

By Corollary 3.5 in \cite{G-Z_restriction}, for almost every point $x_0{\in}A{\cap}H$ near $o$, $c_{x_0}(u|_H)=c_{x_0}(u)=1$. By Theorem 2.2, \cite{G-Z_restriction}, we have $\dim_{x_0}A=n-2+\dim_{x_0}A{\cap}H=n-1$ for any point $x_0$ near $o$ with $c_{x_0}(u|_H)=c_{x_0}(u)=1$. Then, it follows that for almost every point $x_0{\in}A{\cap}H$ near $o$, $x_o\not\in A_{\le n-2}$, which implies $A_{\le n-2}\cap H=o$ and $$(A_{n-1}{\cap}H,o)=(N(z_1z_2|_H,o))=(N(F(z)){\cap}H,o).$$

As $F(z)|_{U{\cap}H}=z_1z_2|_{U{\cap}H}$, $F(z)_{o}$ is the minimal defining function of $(N(F(z)),o)$. Otherwise, there exists holomorphic function $g$ near $o$ such that $g^2|F$, which contradicts to $F(z)|_{U{\cap}H}=z_1z_2|_{U{\cap}H}$. Let $(F_{n-1}(z),o)$ be the minimal defining function of $(A_{n-1},o)$, which is a factor of $F(z)_{o}$. Suppose that $F(z)$ has another factor $h(z)$ near $o$ such that $(N(h(z)),o)\not\subset(A_{n-1},o)$, then $F_{n-1}(z)h(z)|F(z)$, which implies that $F(z)|_{U{\cap}H}$ has at least order 3 at $o$. It contradicts to $F(z)|_{U{\cap}H}=z_1z_2|_{U{\cap}H}$. Hence, $F(z)_{o}$ is the minimal defining function of $(A_{n-1},o)$. Thus, it follows from $(A,o)\subset(N(F(z)),o)=(A_{n-1},o)$ that
$$(A_{n-1},o)=(N(F(z)),o)=(A,o)\mbox{ and } \mathscr{I}(u)_{o}=F(z)\cdot\mathcal{O}_{n}.$$

By Weierstrass Preparation Theorem, we can assume that
$$F(w_1,w_2,z_3,...,z_n)=w_1^2+a_1(w_2,z_3,...,z_n)w_1+a_2(w_2,z_3,...z_n)$$
and $F(w_1,w_2,z_3,...,z_n)|_H=w_1^2+w_2^2$.

Note that $$F(w_1,w_2,z_3,...,z_n)=\left(w_1+\frac{a_1}{2}\right)^2+a_2-\left(\frac{a_1}{2}\right)^2.$$
Let $a_3(w_2,z_3,...,z_n):=a_2-\left(\frac{a_1}{2}\right)^2$. As $F(w_1,w_2,z_3,...,z_n)|_H=\omega_1^2+\omega_2^2$, we have $a_3|_H=w_2^2$. Apply Weierstrass Preparation Theorem to $a_3$, we assume $$a_3(w_2,z_3,...,z_n)=e(w_2,z_3,...,z_n)\left(w_2^2+b_1(z_3,...,z_n)w_2+b_2(z_3,...,z_n)\right),$$ where $e(w_2,z_3,...,z_n)\in\mathcal{O}_{n}$ is a unit and $b_1(0)=b_2(0)=0$.

Let $b_3:=b_2-\left(\frac{b_1}{2}\right)^2$. Then we obtain that
$$F(w_1,w_2,z_3,...,z_n)=e(w_2,z_3,...,z_n)\left(e(w_2,z_3,...,z_n)^{-1}w_1^2+(w_2-\frac{b_1}{2})^2+b_3\right).$$
Change the variables at $o$ again, it follows that
$$F(w_1,w_2,z_3,...,z_n)=e(w_2,...,z_n)\left(w_1^2+w_2^2+f(z_3,...,z_n)\right),$$
where $e(0)\neq0,\ f(z_3,...,z_n)\in\mathfrak{m}_{n}$. That is to say\\

\hspace*{\fill}$\mathscr{I}(u)_{o}=\left(w_1^2+w_2^2+f(z_3,...,z_n)\right)\cdot\mathcal{O}_{n}.$\hspace*{\fill} $\Box$

\vspace{.1in} {\em Acknowledgements}. Both authors would like to sincerely thank our supervisor, Professor Xiangyu Zhou for valuable discussions and useful comments.

The second author is also grateful to Beijing International Center for Mathematical Research for its good working conditions.

\end{document}